\newtheorem{thm}{Theorem}[section]
\newtheorem{cor}[thm]{Corollary}
\newtheorem{lem}[thm]{Lemma}
\newtheorem{que}[thm]{Question}
\theoremstyle{definition}
\newtheorem{rem}[thm]{Remark}
\newtheorem{ex}[thm]{Example}
\newcommand{\N}{\mathbb{N}}
\newcommand{\Q}{\mathbb{Q}}
\newcommand{\R}{\mathbb{R}}
\newcommand{\Z}{\mathbb{Z}}
\title[Domination between different products]
{Domination between different products \\ and finiteness of associated semi-norms}
\author{Christoforos Neofytidis}
\address{Department of Mathematics, Ohio State University, Columbus, OH 43210, USA}
\email{neofytidis.1@osu.edu}
\date{\today}
\subjclass[2010]{57N65, 55M25, 46B20}
\keywords{Non-zero degree maps, semi-norms on products}
\thanks{I am grateful to M. Gromov for stimulating questions and discussions. 
The hospitality and support of the IH\'ES where part of this project was carried out is also gratefully acknowledged.}
\begin{document}

\begin{abstract}
In this note we determine all possible dominations between different products of manifolds, when none of the factors of the codomain is dominated by products. As a consequence, we determine the finiteness of every product-associated functorial semi-norm on the fundamental classes of the aforementioned products. These results give partial answers to questions of M. Gromov. 
\end{abstract}

\maketitle

\section{Motivation and Results}

A finite functorial semi-norm in degree $k\in\N$ singular homology is a semi-norm
\[
\nu\colon H_k(X;\R)\longrightarrow[0,\infty)
\]
for every topological space $X$, where ``functorial" means that the semi-norm $\nu$ is not increasing under induced homomorphisms $f_*\colon H_*(Y)\longrightarrow H_*(X)$ for all continuous maps $f\colon Y\longrightarrow X$.

In \cite[Chapter 5G$_+$]{gromovmetric} Gromov suggested (originally using the Euler characteristic of products of surfaces, see below) the following construction of product-associated semi-norms on homology classes of a topological space $X$: Let $\nu$ be a finite functorial semi-norm on the fundamental classes of products of closed oriented $k$-manifolds. For a homology class $\alpha\in H_{k\ell}(X;\Z)$, $\ell\in\N$, define
\begin{equation}\label{definition}
\nu_{k,\ell}(\alpha):=\inf_{d,M_1\times\cdots\times M_\ell,f}  \frac{\nu([M_1\times\cdots\times M_\ell])}{d},
\end{equation}
where the infimum is taken over all $d=1,2,...$, all products $M_1\times\cdots\times M_\ell$ of closed oriented $k$-manifolds $M_i$, and all continuous maps $f\colon M_1\times\cdots\times M_\ell\longrightarrow X$ such that $f_*([M_1\times\cdots\times M_\ell])=d\cdot\alpha$. For $\ell=1$ we have a trivial product with only one factor.

The idea of extending $\nu$ from the category of products of $k$-manifolds to $\nu_{k,\ell}$, i.e. to any $k\ell$-dimensional integral homology class and any topological space $X$, stems from the following immediate property that every functorial semi-norm satisfies:

\begin{lem}[Mapping Lemma]\label{property}%\cite[Section 5.35-36]{gromovmetric}
Let $k\geq1$ and $\nu$ be a (finite) functorial semi-norm on the fundamental classes of products of closed oriented $k$-manifolds $M_i$. If $f\colon M_1\times\cdots\times M_\ell\longrightarrow M'_1\times\cdots\times M'_\ell$ is a map of degree $d$, then $\nu(M_1\times\cdots\times M_\ell)\geq d\cdot\nu(M'_1\times\cdots\times M'_\ell)$.
\end{lem}

Similarly to Thurston, who used the Euler  characteristic of embedded surfaces in a $3$-manifold $M$ to define a norm in $H_2(M)$, Gromov's original example in \cite{gromovmetric} 
is a norm in degree $2\ell$ homology where $\nu$ is the absolute value of the Euler characteristic $\chi$ of (products of) surfaces. Namely, for a 
space $X$ and $\alpha\in H_{2\ell}(X;\Z)$, $\ell\in\N$, the {\em (product) Euler characteristic norm} is defined as
\begin{equation}
\chi_{2,\ell}(\alpha):=\inf_{d,\Sigma_1\times\cdots\times\Sigma_\ell,f}  \frac{|\chi(\Sigma_1\times\cdots\times\Sigma_\ell)|}{d},
\end{equation}
where the infimum is taken over all $d=1,2,...$, all products $\Sigma_1\times\cdots\times\Sigma_\ell$ of closed hyperbolic surfaces $\Sigma_i$, and all continuous maps $f\colon\Sigma_1\times\cdots\times\Sigma_\ell\longrightarrow X$ such that $f_*([\Sigma_1\times\cdots\times\Sigma_\ell])=d\cdot\alpha$.
Indeed, the Euler characteristic satisfies Lemma \ref{property} for maps between (products of) surfaces; see the Mapping Lemmas in \cite[Sections 5.35--36]{gromovmetric}.

Gromov asked when the product Euler characteristic norm is finite, writing~\cite[page 301]{gromovmetric}
\begin{center}
{\em `` it is unclear which classes in $H_{2\ell}(X)$\\ come from (mapped) products of surfaces"}.
\end{center}
The obvious generalization of Gromov's question is:

\begin{que}\label{q:Gromovnorm}
Let $\nu$ be a finite functorial semi-norm on the fundamental classes of products of closed oriented $k$-manifolds. For which spaces $X$ and which homology classes $\alpha\in H_{k\ell}(X;\Z)$, $\ell\in\N$, is $\nu_{k,\ell}(\alpha)$ finite?
\end{que}

Gromov predicted that the product Euler characteristic norm is infinite on many $2\ell$-dimensional fundamental classes ($\ell>1$), pointing out the fundamental classes of irreducible locally symmetric spaces as potential candidates. That prediction has since been verified by Kotschick and L\"oh, who proved that irreducible locally symmetric spaces of non-compact type do not admit maps of non-zero degree from direct products (whose factors are of any dimension, not necessarily surfaces); cf.~\cite[Corollary 4.2]{KL}.  

The topic of realizing (co-)homology classes by direct products of manifolds is a special case of a classical problem of Steenrod~\cite[Problem 25]{EilenbergSteenrod}. When the target homology class is the fundamental class of a manifold, we deal with maps of non-zero degree. We say that $M$ {\em dominates} $N$, and write $M\geq N$, if there is a continuous map $f\colon M\longrightarrow N$ of non-zero degree, that is $f_*([M])=\deg(f)\cdot[N]$ in homology or equivalently $f^*(\omega_N)=\deg(f)\cdot\omega_M$ in cohomology (as usual, $\omega_M\in H^{\dim M}(M)$ denotes 
the cohomological fundamental class of $M$). 

The following question, posed to me by M. Gromov, is essential in order to understand the finiteness of $\nu_{k,\ell}$ on the fundamental classes of arbitrary products, and has also independent interest on the level of domination between manifolds:

\begin{que}\label{q:Gromov}
Let $X_1\times\cdots\times X_m$ be a Cartesian product of closed oriented manifolds $X_i$ of positive dimensions. Which other non-trivial products dominate $X_1\times\cdots\times X_m$?
\end{que}

In this paper we give a complete answer to Question \ref{q:Gromov} when none of the factors $X_i$ is dominated by products:

\begin{thm}\label{t:main}
 Suppose $X_1,...,X_m,Y_1,...,Y_\ell$ are closed oriented manifolds of positive dimensions, such that $X_1,...,X_m$ are not dominated by non-trivial direct products and $\dim(X_1\times\cdots\times X_m)=\dim(Y_1\times\cdots\times Y_\ell)$. Then 
$ Y_1\times\cdots\times Y_\ell \geq X_1\times\cdots\times X_m$
 if and only if 
 $Y_i\geq X_{a_{i1}}\times\cdots\times X_{a_{i\xi_i}}$ for all $i=1,...,\ell$,
where $\xi_i\geq 1$, $a_{ij}\in\{1,...,m\}$ and $a_{ij}\neq a_{i'j'}$ if $(i,j)\neq(i',j')$.
\end{thm}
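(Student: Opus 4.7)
The ``if'' direction is immediate: given maps $g_i\colon Y_i\to X_{a_{i1}}\times\cdots\times X_{a_{i\xi_i}}$ of non-zero degree, the product $g_1\times\cdots\times g_\ell$, composed with the canonical reordering of the codomain factors, yields a map $Y_1\times\cdots\times Y_\ell\to X_1\times\cdots\times X_m$ of degree $\prod_i\deg(g_i)\neq 0$. For the ``only if'' direction I would argue cohomologically via the K\"unneth formula, exploiting the non-dominability hypothesis on each $X_j$.

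Suppose $f\colon Y:=Y_1\times\cdots\times Y_\ell\to X:=X_1\times\cdots\times X_m$ has degree $d\neq 0$. Setting $\omega_j=p_j^*\omega_{X_j}\in H^{d_j}(X;\Q)$ (with $d_j=\dim X_j$) and $\alpha_j=f^*\omega_j\in H^{d_j}(Y;\Q)$, one has $\alpha_1\cup\cdots\cup\alpha_m=d\cdot\omega_{Y_1}\times\cdots\times\omega_{Y_\ell}$. Write each $\alpha_j=\sum_{\mathbf{k}}\alpha_{j,\mathbf{k}}$ via K\"unneth, where $\mathbf{k}=(k_1,\ldots,k_\ell)$ satisfies $\sum_i k_i=d_j$ and $\alpha_{j,\mathbf{k}}\in H^{k_1}(Y_1)\otimes\cdots\otimes H^{k_\ell}(Y_\ell)$. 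The key input from the hypothesis is the following \emph{slice vanishing}: for every $S\subseteq\{1,\ldots,\ell\}$ with $|S|\geq 2$ and $\sum_{i\in S}\dim Y_i=d_j$, the restriction $p_j\circ f\circ\iota_S\colon Y_S:=\prod_{i\in S}Y_i\to X_j$ (with $\iota_S$ the inclusion of a generic slice) is a map from a non-trivial product to $X_j$; by Property~\ref{property} combined with the non-dominability of $X_j$, its degree must vanish, and this degree is precisely the coefficient of $\bigotimes_{i\in S}\omega_{Y_i}$ in the maximal-on-$S$ K\"unneth component $\alpha_{j,\mathbf{k}_S}$ (defined by $k_i=\dim Y_i$ for $i\in S$, $k_i=0$ otherwise).

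Expanding $\prod_j\alpha_j$ via K\"unneth, the $\omega_Y$-coefficient is a sum indexed by integer matrices $K=(k_{j,i})$ with row sums $d_j$ and column sums $\dim Y_i$; the goal is to show that only ``sparse'' matrices (those with a single non-zero entry per row) contribute non-trivially. Such a sparse $K$ determines a function $\sigma\colon\{1,\ldots,m\}\to\{1,\ldots,\ell\}$ by $k_{j,\sigma(j)}=d_j$, whose fibers $A_i:=\sigma^{-1}(i)$ yield the required partition, necessarily non-empty since each column sum $\dim Y_i\geq 1$; the corresponding slice composition $Y_i\hookrightarrow Y\xrightarrow{f}X\to\prod_{j\in A_i}X_j$ then has non-zero degree, providing $Y_i\geq\prod_{j\in A_i}X_j$. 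The main obstacle lies in this sparsity extraction: the slice vanishing directly kills only ``top-$S$'' K\"unneth components, whereas genuinely mixed components of intermediate bidegree could a priori also contribute to $\omega_Y$. Overcoming this is where the real work is expected — plausibly via an inductive argument on $|S|$ that propagates the vanishing to smaller sub-products, or by invoking a refined cohomological lemma that leverages the non-dominability hypothesis across all compatible dimensional decompositions simultaneously.
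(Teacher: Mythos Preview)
Your setup matches the paper's, and you have correctly isolated the crux: one must show that \emph{every} mixed K\"unneth component $\alpha_{j,\mathbf{k}}$ (any $\mathbf{k}$ with at least two positive entries) vanishes, not only the top-$S$ ones detected by your slice argument. The remedies you float---induction on $|S|$, an unspecified refined lemma---do not close this gap, and the paper instead invokes a concrete tool you are missing: Thom's realization theorem. Suppose some $\alpha_{j,\mathbf{k}}$ with two or more $k_i>0$ were nonzero. Pick classes $\beta_i\in H_{k_i}(Y_i;\Q)$ with $\langle\alpha_{j,\mathbf{k}},\,\beta_1\times\cdots\times\beta_\ell\rangle\neq 0$; by K\"unneth orthogonality this pairing equals $\langle\omega_{X_j},(p_{X_j}\circ f)_*(\beta_1\times\cdots\times\beta_\ell)\rangle$. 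Thom's theorem supplies closed oriented $k_i$-manifolds $M_i$ and maps $g_i\colon M_i\to Y_i$ realizing nonzero rational multiples of the $\beta_i$, whence $p_{X_j}\circ f\circ(g_1\times\cdots\times g_\ell)$ is a nonzero-degree map from a non-trivial product to $X_j$, contradicting the hypothesis. Hence $\alpha_j\in\bigoplus_{i=1}^{\ell} H^{d_j}(Y_i;\Q)$, which is precisely the paper's equation~(\ref{equation}).

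With this purity statement in hand your matrix analysis collapses: every row of every contributing $K$ is automatically concentrated in a single column, so all contributing $K$ are sparse, and since their total is $d\neq 0$ some $\sigma$ gives $\prod_i\deg(h_i)\neq 0$, finishing the proof. The paper organizes this endgame as a short case split on $\ell=m$ versus $\ell<m$, but the content is the same as yours. (A minor point: your appeal to Property~\ref{property} for the slice vanishing is superfluous---the non-dominability hypothesis on $X_j$ yields degree zero directly.)
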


In particular, we obtain an answer to Question \ref{q:Gromovnorm} for fundamental classes of products whose factors are not dominated by products:

\begin{cor}\label{c:finiteness}
Let $X_1,...,X_m$ be closed oriented manifolds of positive dimensions that are not dominated by non-trivial direct products and $\dim(X_1\times\cdots\times X_m)=k\ell$, for some $k, \ell\in\N$. The following are equivalent:
\begin{itemize}
\item[(i)] $X_1\times\cdots\times X_m$ is a product with $\ell$ factors of closed oriented $k$-manifolds.
\item[(ii)] Every semi-norm $\nu_{k,\ell}$ is finite on $[X_1\times\cdots\times X_m]$.
\item[(iii)] There is a finite semi-norm $\nu_{k,\ell}$ on $[X_1\times\cdots\times X_m]$.
\end{itemize}
\end{cor}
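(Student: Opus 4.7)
The plan is to prove the cycle $(i) \Rightarrow (ii) \Rightarrow (iii) \Rightarrow (i)$, with Theorem \ref{t:main} doing essentially all of the real work. A preliminary observation streamlines the argument: since each admissible $\nu$ is by hypothesis \emph{finite} on fundamental classes of products of closed oriented $k$-manifolds, the value $\nu_{k,\ell}([X_1\times\cdots\times X_m])$ is finite if and only if the set over which the infimum in \eqref{definition} is taken is non-empty. Equivalently, $\nu_{k,\ell}([X_1\times\cdots\times X_m])<\infty$ holds (for any one, or equivalently for all, such $\nu$) if and only if some product of $\ell$ closed oriented $k$-manifolds admits a non-zero degree map to $X_1\times\cdots\times X_m$. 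This decouples the combinatorial content from the particular choice of $\nu$.

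Granting this observation, $(i)\Rightarrow(ii)$ is immediate: if $X_1\times\cdots\times X_m$ already splits as a product of $\ell$ closed oriented $k$-manifolds, then the identity map with $d=1$ is admissible in the infimum and yields $\nu_{k,\ell}([X_1\times\cdots\times X_m])\leq \nu([X_1\times\cdots\times X_m])<\infty$ for every finite functorial $\nu$. The implication $(ii)\Rightarrow(iii)$ is formal, since at least one finite functorial semi-norm on fundamental classes of products of $k$-manifolds exists (for instance, the zero semi-norm, or the Gromov simplicial volume).

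The substantive step is $(iii)\Rightarrow(i)$. Finiteness of some $\nu_{k,\ell}([X_1\times\cdots\times X_m])$ produces, by the preliminary observation, a non-zero degree map $f\colon M_1\times\cdots\times M_\ell\longrightarrow X_1\times\cdots\times X_m$ from a product of $\ell$ closed oriented $k$-manifolds. The hypotheses of Theorem \ref{t:main} are satisfied (the $X_j$ are not dominated by non-trivial products, and the total dimensions match by construction), so applying it with $Y_i=M_i$ yields, for each $i$, a domination $M_i\geq X_{a_{i1}}\times\cdots\times X_{a_{i\xi_i}}$ where the multi-indices $(a_{ij})$ partition $\{1,\ldots,m\}$. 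A short dimension comparison then forces the partition to be balanced: from $k=\dim M_i\geq \sum_{j=1}^{\xi_i}\dim X_{a_{ij}}$ summed over $i$, together with $\sum_{i,j}\dim X_{a_{ij}}=\sum_{j=1}^m \dim X_j=k\ell$, the inequalities must be equalities termwise, so each subproduct $N_i:=X_{a_{i1}}\times\cdots\times X_{a_{i\xi_i}}$ is a closed oriented $k$-manifold. Regrouping displays $X_1\times\cdots\times X_m=N_1\times\cdots\times N_\ell$ as a product of $\ell$ closed oriented $k$-manifolds, which is (i). The only genuine obstacle is Theorem \ref{t:main}, used here as a black box; the rest of the argument is dimension bookkeeping.
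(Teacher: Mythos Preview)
Your proof is correct and follows essentially the same route as the paper's: the cycle $(i)\Rightarrow(ii)\Rightarrow(iii)\Rightarrow(i)$ with Theorem~\ref{t:main} doing the work in the last step. One small simplification: since domination is by definition a non-zero degree map between equidimensional manifolds, the conclusion $M_i\geq X_{a_{i1}}\times\cdots\times X_{a_{i\xi_i}}$ of Theorem~\ref{t:main} already forces $\dim M_i=\sum_j\dim X_{a_{ij}}$, so your inequality is an equality from the start and the summation argument is not needed (though it does, as a byproduct, verify that the $a_{ij}$ exhaust $\{1,\ldots,m\}$, which Theorem~\ref{t:main} as stated only asserts implicitly via distinctness plus the total dimension constraint).
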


Note that if $\alpha=[M_1\times\cdots\times M_\ell]$ in (\ref{definition}), then obviously
\[\nu_{k,\ell}(\alpha)=\nu([M_1\times\cdots\times M_\ell]).\]
 Thus, the equivalent conditions (i)-(iii) in Corollary \ref{c:finiteness} are moreover equivalent to 
\[\nu_{k,\ell}([X_1\times\cdots\times X_m])=\nu([X_1\times\cdots\times X_m]),\]
for every finite semi-norm $\nu$.

\section{Proofs}

We now prove Theorem \ref{t:main} and Corollary \ref{c:finiteness}. The proof of Theorem \ref{t:main} uses Thom's work~\cite{Thom} on the Steenrod problem about realizing homology classes by closed manifolds~\cite[Problem 25]{EilenbergSteenrod}. Thom's celebrated realization theorem states that, given a topological space $X$ and a homology class $\alpha\in H_k(X;\Z)$, there is a closed oriented smooth $k$-dimensional manifold $M$ and a continuous map $f\colon M\longrightarrow X$ such that $f_*([M])=d\cdot \alpha$, for some non-zero integer $d$. Or, equivalently, if one starts with a cohomology class $\beta\in H^k(X;\Z)$, then $f^*(\beta)=d\cdot \omega_M$.

\begin{proof}[Proof of Theorem \ref{t:main}]
The `` if " direction is trivial and so we prove the converse. 

Let $f\colon Y_1\times\cdots\times Y_\ell\longrightarrow X_1\times\cdots\times X_m$ be a map of non-zero degree, and denote by $p_{X_i}\colon X_1\times\cdots\times X_m\longrightarrow X_i$ the projection to the $i$-th factor. Then $f^*(p_{X_i}^*(\omega_{X_i}))$ is not trivial 
and, since the $X_i$ are not dominated by products, Thom's theorem~\cite{Thom} implies that $f^*(p_{X_i}^*(\omega_{X_i}))$ belongs in
\[
H^{\dim X_i}(Y_1;\Q)\oplus\cdots\oplus H^{\dim X_i}(Y_j;\Q)\oplus\cdots\oplus H^{\dim X_i}(Y_\ell;\Q).\]
 Indeed, suppose $p_{X_i}^*(\omega_{X_i})$ maps non-trivially under $f^*$ in some 
\[
H^{k_1}(Y_1;\Q)\otimes\cdots\otimes H^{k_j}(Y_j;\Q)\otimes\cdots\otimes H^{k_\ell}(Y_\ell;\Q),
\]
where $0\leq k_1,...,k_\ell <\dim X_i$ and $k_1+\cdots+k_\ell=\dim X_i$. Then, by Thom's theorem, there exist two closed oriented manifolds $W_1$ and $W_2$ of positive dimensions, with $\dim W_1+\dim W_2=\dim X_i$, and a continuous map $g\colon W_1\times W_2\longrightarrow X_i$ such that $g^*(f^*(p_{X_i}^*(\omega_{X_i})))=d\cdot\omega_{W_1\times W_2}\in H^{\dim X_i}(W_1\times W_2)$, for some non-zero integer $d$. Thus $W_1\times W_2\geq X_i$, which contradicts our assumption that $X_i$ is not dominated by products.

Thus, we have
\begin{equation}\label{equation}
f^*(p_{X_i}^*(\omega_{X_i}))=\sum_{j=1}^\ell (1\times\cdots\times 1\times\alpha_j^{X_i}\times 1\times\cdots\times 1),
\end{equation}
where $\alpha_j^{X_i}\in H^{\dim X_i}(Y_j;\Q)$. 
\subsection{A Reduction: $\ell\leq m$}\label{reduction} 

We first observe that (\ref{equation}) implies that $\ell$ can be at most $m$, otherwise the number of factors in the codomain $X_1\times\cdots\times X_m$ of $f$ would not suffice to give
\begin{eqnarray}\label{equation1}
\begin{split}
f^*(p_{X_1}^*(\omega_{X_1}))\cup\cdots\cup f^*(p_{X_m}^*(\omega_{X_m})) 
& =   \deg(f)\cdot\omega_{Y_1\times\cdots\times Y_\ell} \\
& =  \deg(f)\cdot\omega_{Y_1}\times\cdots\times\omega_{Y_\ell}
\end{split}
\end{eqnarray}

Thus we split the proof into the following cases:

\subsection{Case I: $\ell=m$}\label{caseI}

In this case, (\ref{equation}) and (\ref{equation1}) imply that for each $X_i$ there exists at least one $Y_j$ such that 
$$\dim X_i =\dim Y_j \ \text{ and} \ f^*(p_{X_i}^*(\omega_{X_i}))\neq 0\in H^{\dim Y_j}(Y_j;\Q).$$ 
This means that $Y_j\geq X_i$ through the composite map
\[
Y_j\xhookrightarrow{\iota_{Y_j}} Y_1\times\cdots\times Y_m\stackrel{f}\longrightarrow X_1\times\cdots\times X_m\xrightarrow{p_{X_i}} X_i,
\]
where $\iota_{Y_j}\colon Y_j\hookrightarrow Y_1\times\cdots\times Y_m$ is the inclusion.

The assumption that $\ell=m$ and (\ref{equation1}) imply moreover that for each $i\neq i'$ there exist $j\neq j'$ with $Y_j\geq X_i$ and $Y_{j'}\geq X_{i'}$. Thus, after reordering the $Y_i$ if necessary, we conclude that $Y_i\geq X_i$ for all $i=1,2,...,m$.

\subsection{Case II: $\ell<m$}\label{caseII}

In this case, (\ref{equation}) and (\ref{equation1}) imply that for some $Y_i$ there exist $X_{a_{i1}},...,$ $X_{a_{i\xi_i}}$, $\xi_i\geq 2$,  among the $X_1,...,X_m$, such that 
$$\sum_{j=1}^{\xi_i}\dim X_{a_{ij}}=\dim Y_i \ $$
and 
$$f^*(p_{X_{a_{i1}}\times\cdots\times X_{a_{i\xi_i}}}^*(\omega_{X_{a_{i1}}\times\cdots\times X_{a_{i\xi_i}}}))\neq 0 \in H^{\dim Y_i}(Y_i;\Q),$$
where 
$p_{X_{a_{i1}}\times\cdots\times X_{a_{i\xi_i}}}\colon X_1\times\cdots\times X_m \longrightarrow X_{a_{i1}}\times\cdots\times X_{a_{i\xi_i}}$ is the projection. This means that $Y_i\geq X_{a_{i1}}\times\cdots\times X_{a_{i\xi_i}}$ through the composite map
\[
Y_i\xhookrightarrow{\iota_{Y_i}} Y_1\times\cdots\times Y_\ell\stackrel{f}\longrightarrow X_1\times\cdots\times X_m \xrightarrow{p_{X_{a_{i1}}\times\cdots\times X_{a_{i\xi_i}}}} X_{a_{i1}}\times\cdots\times X_{a_{i\xi_i}},
\]
where $\iota_{Y_i}\colon Y_j\hookrightarrow Y_1\times\cdots\times Y_\ell$ is the inclusion.

Now, by the naturality of the cup product, we obtain
$$
Y_1\times\cdots\times Y_{i-1}\times Y_{i+1}\times\cdots\times Y_\ell \geq \prod_{\substack{q=1 \\ q\notin\{a_{i1},...,a_{i\xi_i}\}}}^m X_q,
$$
and so Reduction \ref{reduction} implies that $\ell-1\leq m-\xi_i$. If $\ell-1=m-\xi_i$, then the result follows by Case I. If $\ell-1< m-\xi_i$, then we repeat the argument as in Case II, to find some $i'\neq i$ and some $\xi_{i'}\geq 2$ such that $Y_{i'}\geq X_{a_{i'1}}\times\cdots\times X_{a_{i'\xi_{i'}}}$ (where $a_{ij}\neq a_{i'j'}$ for all $1\leq j\leq \xi_i,1\leq j' \leq \xi_{i'}$). We then have $\ell-2\leq m-\xi_i-\xi_{i'}$ and we finish the proof by iterating the process.
\end{proof}

\begin{proof}[Proof of Corollary \ref{c:finiteness}] 
(i) $\Rightarrow$ (ii)
If $X_1\times\cdots\times X_m$ can be written as a product with $\ell$ factors of closed oriented $k$-manifolds $Y_i$, then clearly every semi-norm $\nu_{k,\ell}$ is finite on $[X_1\times\cdots\times X_m]$, because 
\[
\nu_{k,\ell}([X_1\times\cdots\times X_m])=\nu_{k,\ell}([Y_1\times\cdots\times Y_\ell])=\nu([Y_1\times\cdots\times Y_\ell])
\]
and $\nu([Y_1\times\cdots\times Y_\ell])$ is finite by assumption. 

(ii) $\Rightarrow$ (iii) This implication holds trivially.

(iii) $\Rightarrow$ (i) Suppose that some semi-norm $\nu_{k,\ell}([X_1\times\cdots\times X_m])$ is finite. This means that there exist closed oriented $k$-manifolds $Y_1,...,Y_\ell$ such that 
\[
Y_1\times\cdots\times Y_\ell\geq X_1\times\cdots\times X_m.
\]
Then Theorem \ref{t:main} implies that each $Y_i$ dominates a different (and possibly containing only one factor) subproduct $X_{a_{i1}}\times\cdots\times X_{a_{i\xi_i}}\subset X_1\times\cdots\times X_m$. In particular, each $X_{a_{i1}}\times\cdots\times X_{a_{i\xi_i}}$ is a $k$-manifold, and so $X_1\times\cdots\times X_m$ can be written as a product with factors those $\ell$ $k$-manifolds:
\[
(X_{a_{11}}\times\cdots\times X_{a_{1\xi_1}})\times(X_{a_{21}}\times\cdots\times X_{a_{2\xi_2}})\times\cdots\times(X_{a_{\ell1}}\times\cdots\times X_{a_{\ell\xi_\ell}}).
\]
\end{proof}

\begin{rem}
The statements and proofs in this paper are on the level of products of fundamental classes of manifolds. One can naturally generalize Theorem \ref{t:main} to the level of realizing arbitrary products of co-homology classes by other products of co-homology classes and, subsequently, obtain (non-)finiteness results of semi-norms on products of more general co-homology classes instead of fundamental classes of products of manifolds.
\end{rem}

\section{Two illustrative examples}
The key property in this paper is that none of the factors of the codomain is dominated by direct products.
There is a variety of examples of manifolds that are not dominated by products, and techniques to identify such manifolds were developed in the recent years~\cite{KL,KL2,KN,NeoIIPP}. Some large classes of examples are non-positively curved manifolds that are not decomposable as products and certain circle bundles, including low-dimensional aspherical manifolds that possess certain Thurston geometries. So, any combination of those manifolds can be used to construct direct products that fulfill Theorem \ref{t:main} and Corollary \ref{c:finiteness}. 

\begin{ex}\label{ex.1}
Suppose $X_1, X_2, X_3$ are closed oriented manifolds of dimensions 
$\dim(X_1)=3$,  $\dim(X_2)=6$ and $\dim(X_3)=9$.
The possible ordered pairs $(k,\ell)$ such that $k\ell=\dim (X_1 \times X_2 \times X_3) =18$ are 
$$(1,18), \ (2,9), \ (3,6), \ (6,3), \ (9,2), \ (18,1).$$  If the $X_i$ are not dominated by products, then Corollary \ref{c:finiteness} applies: First, since there are three factors, then for any finite functorial semi-norm $\nu$ we obtain
$$\nu_{1,18}([X_1\times X_2\times X_3])=\nu_{2,9}([X_1\times X_2\times X_3])=\nu_{3,6}([X_1\times X_2\times X_2])=\infty.$$
Also $X_1\times X_2\times X_3$ is not a product of three $6$-manifolds, thus we have 
\[
\nu_{6,3}([X_1\times X_2\times X_3])=\infty.
\]
 However, $X_1\times X_2\times X_3$ is a product of the two $9$-dimensional manifolds $X_1\times X_2$ and $X_3$ and so
\[
\nu_{9,2}([X_1\times X_2\times X_3])=\nu([X_1\times X_2\times X_3])<\infty.
\]
Finally, it is immediate by the definition in (\ref{definition}) that 
\[
\nu_{18,1}([X_1\times X_2\times X_3])=\nu([X_1\times X_2\times X_3])<\infty.
\]
\end{ex}

\begin{ex}\label{ex.2}
Let $X_1,...,X_m$, $\dim(X_i)\geq2$, be closed oriented manifolds that are not dominated by products. Suppose
\[
\Sigma_1\times\cdots\times \Sigma_\ell \geq X_1\times\cdots\times X_m,
\]
where $\Sigma_1,...,\Sigma_\ell$ are closed oriented (hyperbolic) surfaces. By Theorem \ref{t:main} (or by~\cite[Theorem 2.3]{KLN}) we conclude that each $X_i$ is a surface, and since the $X_i$ are not dominated by products, we deduce that each $X_i$ is a hyperbolic surface (and also $m=\ell$).

Thus Corollary \ref{c:finiteness} implies that, if $X_1,...,X_m$ are closed oriented manifolds that are not dominated by products, then
\[
\chi_{2,\ell}([X_1\times\cdots\times X_m])=
\begin{cases}
|\chi(X_1\times\cdots\times X_m)| &\text{if each $X_i$ is a surface},\\
\infty & \text{otherwise}.
\end{cases}
\]
This answers Question \ref{q:Gromovnorm} for the product Euler characteristic norm on fundamental classes of products whose factors are not themselves dominated by products.
\end{ex}

\end{document}